 \newtheorem{theorem}{Theorem}[section]
 \newtheorem{corollary}[theorem]{Corollary}
 \newtheorem{lemma}[theorem]{Lemma}
 \numberwithin{equation}{section}
\newcommand{\cB}{\mathcal{B}}
\newcommand{\cP}{\mathcal{P}}
\newcommand{\C}{\mathbb{C}}
\newcommand{\N}{\mathbb{N}}
\newcommand{\T}{\mathbb{T}}
\newcommand{\R}{\mathbb{R}}
\newcommand{\Z}{\mathbb{Z}}
\begin{document}
\title[Density of Analytic Polynomials in Abstract Hardy Spaces]
{Density of Analytic Polynomials\\ in Abstract Hardy Spaces}
\author{Alexei Yu. Karlovich}
\address{%
Centro de Matem\'atica e Aplica\c{c}\~oes,\\
Departamento de Matem\'a\-tica, \\
Faculdade de Ci\^encias e Tecnologia,\\
Universidade Nova de Lisboa,\\
Quinta da Torre, \\
2829--516 Caparica, Portugal}
\email{oyk@fct.unl.pt}
\thanks{%
This work was supported by the Funda\c{c}\~ao para a Ci\^encia e a
Tecnologia (Portu\-guese Foundation for Science and Technology)
through the project
UID/MAT/00297/2013 (Centro de Matem\'atica e Aplica\c{c}\~oes).}
\begin{abstract}
Let $X$ be a separable Banach function space on the unit circle $\T$
and $H[X]$ be the abstract Hardy space built upon $X$. We show that
the set of analytic polynomials is dense in $H[X]$ if the Hardy-Littlewood
maximal operator is bounded on the associate space $X'$. This result
is specified to the case of variable Lebesgue spaces.
\end{abstract}

\keywords{%
Banach function space, 
rearrangement-invariant space, 
variable Lebesgue space,
abstract Hardy space,
analytic polynomial,
Fej\'er kernel}

\subjclass{46E30, 42A10}
\maketitle

\section{Introduction}
For $1\le p\le\infty$, let $L^p:=L^p(\T)$ be the Lebesgue space on the unit
circle $\T:=\{z\in\C:|z|=1\}$ in the complex plane $\C$. For $f\in L^1$, let
\[
\widehat{f}(n):=\frac{1}{2\pi}
\int_{-\pi}^\pi f(e^{i\varphi})e^{-in\varphi}\,d\varphi,
\quad n\in\Z,
\]
be the sequence of the Fourier coefficients of $f$.
The classical Hardy spaces $H^p$ are given by
\[
H^p:=\big\{f\in L^p\ :\ \widehat{f}(n)=0\quad\mbox{for all}\quad n<0\big\}.
\]
A function of the form
\[
q(t)=\sum_{k=0}^n\alpha_k t^k,
\quad
t\in\T,
\quad
\alpha_0,\dots,\alpha_n\in\C,
\]
is said to be an analytic polynomial on $\T$. The set of all analytic
polynomials is denoted by $\cP_A$. It is well known that that the set
$\cP_A$ is dense in $H^p$ whenever $1\le p<\infty$
(see, e.g., \cite[Chap.~III, Corollary~1.7(a)]{C91}).

Let $X$ be a Banach space continuously embedded in $L^1$.
Following \cite[p.~877]{Xu92}, we will consider the abstract Hardy space 
$H[X]$ built upon the space $X$, which is defined by
\[
H[X]:=\big\{f\in X:\ \widehat{f}(n)=0\quad\mbox{for all}\quad n<0\big\}.
\]
It is clear that if $1\le p\le\infty$, then $H[L^p]$ is the classical Hardy
space $H^p$. The aim of this note is to find sufficient conditions for
the density of the set $\cP_A$ in the space $H[X]$ when $X$ falls into
the class of so-called Banach function spaces.

We equip $\T$ with the normalized Lebesgue measure $dm(t)=|dt|/(2\pi)$.
Let $L^0$ be the space of all measurable complex-valued functions on $\T$. 
As usual, we do not distinguish functions, which are equal almost everywhere
(for the latter we use the standard abbreviation a.e.).  Let $L^0_+$ be the
subset of functions in $L^0$ whose values lie in $[0,\infty]$. The
characteristic function of a measurable set $E\subset\T$ is denoted by 
$\chi_E$.

Following \cite[Chap.~1, Definition~1.1]{BS88}, a mapping 
$\rho: L_+^0\to [0,\infty]$ is called a Banach function norm
if, for all functions $f,g, f_n\in L_+^0$ with $n\in\N$, for all
constants $a\ge 0$, and for all measurable subsets $E$ of $\T$, the
following  properties hold:
\begin{eqnarray*}
{\rm (A1)} & &
\rho(f)=0  \Leftrightarrow  f=0\ \mbox{a.e.},
\
\rho(af)=a\rho(f),
\
\rho(f+g) \le \rho(f)+\rho(g),\\
{\rm (A2)} & &0\le g \le f \ \mu-\mbox{a.e.} \ \Rightarrow \ 
\rho(g) \le \rho(f)
\quad\mbox{(the lattice property)},\\
{\rm (A3)} & &0\le f_n \uparrow f \ \mbox{a.e.} \ \Rightarrow \
       \rho(f_n) \uparrow \rho(f)\quad\mbox{(the Fatou property)},\\
{\rm (A4)} & & m(E)<\infty\ \Rightarrow\ \rho(\chi_E) <\infty,\\
{\rm (A5)} & &\int_E f(t)\,dm(t) \le C_E\rho(f)
\end{eqnarray*}
with the constant $C_E \in (0,\infty)$ that may depend on $E$ and 
$\rho$,  but is independent of $f$. When functions differing only on 
a set of measure  zero are identified, the set $X$ of all functions 
$f\in L^0$ for  which  $\rho(|f|)<\infty$ is called a Banach function
space. For each $f\in X$, the norm of $f$ is defined by
$\|f\|_X :=\rho(|f|)$.
The set $X$ under the natural linear space operations and under 
this norm becomes a Banach space (see 
\cite[Chap.~1, Theorems~1.4 and~1.6]{BS88}). 
If $\rho$ is a Banach function norm, its associate norm 
$\rho'$ is defined on $L_+^0$ by
\[
\rho'(g):=\sup\left\{
\int_\T f(t)g(t)\,d\mu(t) \ : \ 
f\in L_+^0, \ \rho(f) \le 1
\right\}, \ g\in L_+^0.
\]
It is a Banach function norm itself \cite[Chap.~1, Theorem~2.2]{BS88}.
The Banach function space $X'$ determined by the Banach function norm
$\rho'$ is called the associate space (K\"othe dual) of $X$. 
The associate space $X'$ can be viewed a subspace of the (Banach) 
dual space $X^*$. 

The distribution function $m_f$ of an a.e. finite function $f\in L^0$ 
is defined by
\[
m_f(\lambda):=m\{t\in \T:|f(t)|>\lambda\},\quad\lambda\ge 0.
\]
Two a.e. finite functions $f,g\in L^0$ are said to be equimeasurable if 
\[
m_f(\lambda)=m_g(\lambda) 
\quad\mbox{for all}\quad \lambda\ge 0.
\] 
The non-increasing rearrangement of an a.e. finite function 
$f\in L^0$ is defined by 
\[
f^*(x):=\inf\{\lambda: m_f(\lambda)\le x\},\quad x\ge 0.
\]
We refer to \cite[Chap.~2, Section~1]{BS88} and 
\cite[Chap.~II, Section 2]{KPS82} for properties of distribution functions 
and non-increasing rearrangements. A Banach function space $X$ is called 
re\-ar\-range\-ment-invariant if for every pair of a.e. finite 
equimeasurable functions $f,g \in L^0$, one has the following property: 
if $f\in X$, then $g\in X$ and the equality  $\|f\|_{X}=\|g\|_{X}$ holds. 
Lebesgue spaces $L^p$, $1\le p\le\infty$, as well as, more general
Orlicz spaces, Lorentz spaces, and Marcinkiewicz spaces are classical 
examples of rearrangement-invariant Banach function spaces (see
\cite{BS88,KPS82}). For more recent examples of rearrangement-invariant 
spaces, like Ces\`aro, Copson, and Tandori spaces, we refer to the
paper of Maligranda and Le\'snik \cite{LM16}.

One of our motivations for this work is the recent progress in the study
of Harmonic Analysis in the setting of variable Lebesgue spaces
\cite{CF13,DHHR11,KMRS16}. Let $\mathfrak{P}(\T)$ be the set
of all measurable functions $p: \T\to[1,\infty]$. For 
$p\in\mathfrak{P}(\T)$, put
\[
\T_\infty^{p(\cdot)} :=\{t\in \T\ :\  p(t)=\infty\}.
\]
For a measurable function $f: \T\to\C$, consider
\[
\varrho_{p(\cdot)}(f)
:=
\int_{\T\setminus \T_\infty^{p(\cdot)}}|f(t)|^{p(t)}dm(t)
+\|f\|_{L^\infty(\T_\infty^{p(\cdot)})}.
\]
According to \cite[Definition~2.9]{CF13}, the variable Lebesgue space
$L^{p(\cdot)}$ is defined as the set of all measurable functions
$f:\T\to\C$ such that $\varrho_{p(\cdot)}(f/\lambda)<\infty$
for some $\lambda>0$. This space is a Banach function space with respect
to the Luxemburg-Nakano norm given by
\[
\|f\|_{L^{p(\cdot)}}:=\inf\{\lambda>0: 
\varrho_{p(\cdot)}(f/\lambda)\le 1\}
\]
(see, e.g., \cite[Theorems~2.17, 2.71 and Section~2.10.3]{CF13}). 
If $p\in\mathfrak{P}(\T)$ is constant, then $L^{p(\cdot)}$ is 
nothing but the standard Lebesgue space $L^p$. If $\in\mathfrak{P}(\T)$
is not constant, then $L^{p(\cdot)}$ is not rearrangement-invariant
\cite[Example~3.14]{CF13}. Variable Lebesgue spaces are often called 
Nakano spaces. We refer to Maligranda's paper \cite{M11} for the role 
of Hidegoro Nakano in the study of variable Lebesgue spaces. 
The associate space of $L^{p(\cdot)}$ is isomorphic to the space 
$L^{p'(\cdot)}$, where $p'\in\mathfrak{P}(\T)$ is defined so that 
$1/p(t)+1/p'(t)=1$ for a.e. $t\in\T$ with the usual convention 
$1/\infty:=0$ \cite[Theorem~3.2.13]{DHHR11}. For 
$p\in\mathfrak{P}(\T)$, put
\[
p_-:=\operatornamewithlimits{ess\,inf}_{t\in\T}p(t),
\quad
p_+:=\operatornamewithlimits{ess\,sup}_{t\in\T}p(t).
\]
The space variable Lebesgue space $L^{p(\cdot)}$ is separable
if and only if $p_+<\infty$ (see, e.g., \cite[Theorem~2.78]{CF13}).

The following result is a kind of folklore.
\begin{theorem}\label{th:density-analytic-polynomials-RI}
Let $X$ be a separable rearrangement-invariant Banach function space on $\T$.
Then the set of analytic polynomials $\cP_A$ is dense in the abstract Hardy
space $H[X]$. Moreover, for every $f\in H[X]$, there is a sequence of analytic
polynomials $\{p_n\}$ such that $\|p_n\|_X\le\|f\|_X$ for all $n\in\N$ and
$p_n\to f$ in the norm of $X$ as $n\to\infty$.
\end{theorem}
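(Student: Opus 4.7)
The plan is to convolve with the Fej\'er kernel. For $n\in\N$, let $F_n$ be the $n$-th Fej\'er kernel on $\T$, and set $\sigma_n(f):=f*F_n$; a direct Fourier-coefficient calculation gives the trigonometric polynomial representation
\[
\sigma_n(f)(t)=\sum_{k=-n}^{n}\left(1-\frac{|k|}{n+1}\right)\widehat{f}(k)\,t^k.
\]
Since $F_n\ge 0$ and $\|F_n\|_{L^1}=1$, and since $\widehat{f}(k)=0$ for $k<0$ whenever $f\in H[X]$, the approximant $\sigma_n(f)$ lies in $\cP_A$.

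The first key step is the uniform bound $\|\sigma_n(f)\|_X\le\|f\|_X$. Because $X$ is rearrangement-invariant on the compact group $\T$, its norm is translation-invariant: $\|f(\cdot-s)\|_X=\|f\|_X$ for every $s\in\T$. Writing the convolution as the vector-valued integral $\sigma_n(f)(t)=\int_\T f(t-s)F_n(s)\,dm(s)$ and applying Minkowski's integral inequality (which is available in any Banach function space, being a direct consequence of the triangle inequality and property (A2)) yields
\[
\|\sigma_n(f)\|_X\le\int_\T\|f(\cdot-s)\|_X\,F_n(s)\,dm(s)=\|f\|_X.
\]

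The second step is to prove $\sigma_n(f)\to f$ in $X$ for every $f\in X$. Separability of the rearrangement-invariant space $X$ on the finite-measure space $\T$ implies, via the Bennett--Sharpley theory, that $X$ has absolutely continuous norm; standard arguments (approximate a simple function by a continuous one up to a set of small measure and use absolute continuity) then show that $C(\T)$ is dense in $X$. For $g\in C(\T)$, Fej\'er's theorem gives $\sigma_n(g)\to g$ uniformly, hence also in $X$ since property (A4) yields $L^\infty\hookrightarrow X$. A $3\varepsilon$-argument combining the operator bound $\|\sigma_n\|_{X\to X}\le 1$ with the density of $C(\T)$ then extends convergence to every $f\in X$.

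Specialising to $f\in H[X]$, the sequence $p_n:=\sigma_n(f)\in\cP_A$ satisfies $\|p_n\|_X\le\|f\|_X$ and $p_n\to f$ in $X$, proving both assertions. The only non-routine ingredient is the translation invariance of the rearrangement-invariant norm together with the density of $C(\T)$; granted these, the rest is a standard approximate-identity argument, which is presumably why the result is considered folklore.
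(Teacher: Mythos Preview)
Your proof is correct and follows essentially the same strategy as the paper: approximate by Fej\'er means, use the convolution inequality in rearrangement-invariant spaces to get $\|\sigma_n(f)\|_X\le\|f\|_X$, invoke separability to get density of $C(\T)$ in $X$, apply Fej\'er's classical theorem on $C(\T)$, and extend via a uniform-boundedness/$3\varepsilon$ argument. The only cosmetic differences are that the paper cites \cite[Chap.~3, Lemma~6.1]{BS88} for the convolution bound (whose proof is precisely your Minkowski-plus-translation-invariance computation, though strictly speaking Minkowski's integral inequality in a Banach function space rests on the Fatou property/duality rather than on (A2) alone) and obtains the density of $C(\T)$ via a Hahn--Banach argument rather than via absolute continuity of the norm.
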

Surprisingly enough, we could not find in the literature neither
Theorem~\ref{th:density-analytic-polynomials-RI} explicitly 
stated nor any result on the density of $\cP_A$ in abstract Hardy spaces 
$H[X]$ in the case when $X$ is an arbitrary Banach function space beyond 
the class of rearrangement-invariant spaces. The aim of this note is to fill
in this gap.

Given $f\in L^1$, the Hardy-Littlewood maximal function is defined by
\[
(Mf)(t):=\sup_{I\ni t}\frac{1}{m(I)}\int_I|f(\tau)|\,dm(\tau),
\quad t\in\T,
\]
where the supremum is taken over all arcs  $I\subset\T$ containing $t\in\T$.
The operator $f\mapsto Mf$ is called the Hardy-Littlewood maximal operator.
\begin{theorem}[Main result]
\label{th:density-analytic-polynomials-BFS}
Suppose $X$ is a separable Banach function space on $\T$. If the 
Hardy-Littlewood maximal operator $M$ is bounded on the associate space 
$X'$, then the set of analytic polynomials  $\cP_A$ is dense in the 
abstract Hardy space $H[X]$.
\end{theorem}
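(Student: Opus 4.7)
The plan is to approximate elements of $H[X]$ by their Fej\'er means. Let $K_n$ denote the Fej\'er kernel on $\T$ and put $\sigma_n f:=K_n*f$; since (A5) gives the continuous embedding $X\subset L^1$, this is defined for all $f\in X$. Three facts need to be established: (i) $\sigma_n f\in\cP_A$ whenever $f\in H[X]$; (ii) $\sup_n\|\sigma_n\|_{X\to X}<\infty$; (iii) $\sigma_n f\to f$ in the norm of $X$ for every $f\in X$. Fact (i) is immediate from the identity $\widehat{\sigma_n f}(k)=(1-|k|/(n+1))_+\widehat{f}(k)$, whose right-hand side vanishes for $k<0$ whenever $f\in H[X]$.

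The substance lies in (ii), and this is the only point at which the hypothesis on $M$ is used. Since $K_n\ge 0$ one has $|K_n*f|\le K_n*|f|$, and because the Fej\'er kernel admits a radial-decreasing integrable majorant with $L^1$-norm bounded independently of $n$, the classical estimate $(K_n*|h|)(t)\le C(Mh)(t)$ holds for all $h\in L^1$ and $t\in\T$ with $C$ independent of $n$. Combined with the self-adjointness $\int_\T (K_n*u)v\,dm=\int_\T u(K_n*v)\,dm$ (valid because $K_n$ is real and even), one finds, for $f\in X$ and $g\in X'$,
\[
\int_\T|\sigma_n f||g|\,dm
\le\int_\T(K_n*|f|)|g|\,dm
=\int_\T|f|(K_n*|g|)\,dm
\le C\int_\T|f|\cdot M|g|\,dm.
\]
H\"older's inequality in $X$--$X'$ and the boundedness of $M$ on $X'$ bound the last quantity by $C\|M\|_{X'\to X'}\|f\|_X\|g\|_{X'}$. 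Taking the supremum over $g\in X'$ with $\|g\|_{X'}\le1$ and invoking the Lorentz--Luxemburg identity $\|h\|_X=\sup\{\int_\T|h||g|\,dm:\|g\|_{X'}\le1\}$ (\cite[Chap.~1, Theorem~2.7]{BS88}) yields $\|\sigma_n f\|_X\le C'\|f\|_X$ uniformly in $n$.

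For (iii), thanks to (ii) it suffices to verify convergence on a dense subspace of $X$. Separability of $X$ implies that $X$ has absolutely continuous norm and that the simple (in particular, bounded) functions are dense in $X$ (\cite[Chap.~1, Theorems~3.11 and~5.5]{BS88}). For a bounded $h$, Fej\'er's classical theorem gives $\sigma_n h\to h$ a.e., and $|\sigma_n h-h|\le 2\|h\|_\infty\chi_\T$ is dominated by an element of $X$ of absolutely continuous norm; the dominated convergence theorem in Banach function spaces then forces $\|\sigma_n h-h\|_X\to 0$. A three-$\epsilon$ argument using (ii) extends this convergence to all $f\in X$, and combined with (i) this completes the proof.

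The main obstacle is that the hypothesis concerns $X'$ rather than $X$; the essential device is the self-adjointness of $\sigma_n$, which via the BFS duality formula converts the pointwise bound $K_n*|g|\le CMg$ on the $X'$-side into a uniform norm bound for $\sigma_n$ as an operator on $X$. Once this is in hand, the rest is a standard application of Fej\'er's theorem together with absolute continuity of the norm provided by separability.
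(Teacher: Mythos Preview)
Your proof is correct and follows the same overall strategy as the paper: approximate by Fej\'er means, establish the uniform bound $\sup_n\|\sigma_n\|_{X\to X}<\infty$ by duality (transferring the pointwise estimate $K_n*|g|\le C\,Mg$ to the $X'$-side via the Lorentz--Luxemburg formula and H\"older's inequality), and then upgrade convergence from a dense subspace to all of $X$. The duality argument in your step~(ii) is exactly the paper's Theorem~\ref{th:uniform-boundednes-Fejer-means-BFS}, and step~(i) is the same observation the paper makes from~\eqref{eq:Fejer-mean}.

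The only genuine difference is in step~(iii). The paper first proves (Lemma~\ref{le:density-polynomials}, via a Hahn--Banach argument and uniqueness of Fourier coefficients) that $C$ is dense in any separable Banach function space, and then uses the \emph{uniform} convergence $\|\sigma_n h-h\|_C\to 0$ together with the continuous embedding $C\hookrightarrow X$. You instead invoke the equivalence of separability with absolute continuity of the norm on $\T$, take bounded functions as the dense class, and use a.e.\ convergence of Fej\'er means plus the dominated convergence theorem in $X$. Both routes are standard; yours avoids the Hahn--Banach density lemma at the cost of appealing to the $X_a=X$ characterisation of separability, while the paper's route avoids any pointwise convergence statement for Fej\'er means. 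Neither is materially simpler than the other.
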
 
To illustrate this result in the case of variable Lebesgue spaces,
we will need the following classes of variable exponents.
Following \cite[Definition~2.2]{CF13}, one says that $r:\T\to\R$ is locally 
log-H\"older continuous if there exists a constant $C_0>0$ such that
\[
|r(x)-r(y)|=C_0/(-\log|x-y|))\quad\mbox{for all}\quad x,y\in\T,\quad |x-y|<1/2.
\]
The class of all locally log-H\"older continuous functions is denoted by
$LH_0(\T)$. If $p_+<\infty$, then $p\in LH_0(\T)$ if and only if 
$1/p\in LH_0(\T)$. By \cite[Theorem~4.7]{CF13}, if $p\in\mathfrak{P}(\T)$
is such that $1<p_-$ and $1/p\in LH_0(\T)$, then the Hardy-Littlewood maximal
operator $M$ is bounded on $L^{p(\cdot)}$. This condition was initially
referred to as ``almost necessary" (see \cite[Section~4.6.1]{CF13}
for further details). However, Lerner \cite{L05} constructed an example
of discontinuous variable exponent such that the Hardy-Littlewood maximal 
operator is bounded on $L^{p(\cdot)}$. 

Kapanadze and Kopaliani \cite{KK08} developed further Lerner's ideas. They
considered the following class of variable exponents. Recall
that a function $f\in L^1$ belongs to the space $BMO$ if
\[
\|f\|_*:=\sup_{I\subset\T}\frac{1}{m(I)}\int_I|f(t)-f_I|\,dm(t)<\infty,
\]
where $f_I$ is the integral average of $f$ on the arc $I$ and the supremum
is taken over all arcs $I\subset\T$. For $f\in BMO$, put
\[
\gamma(f,r):=\sup_{m(I)\le r}\frac{1}{m(I)}\int_I|f(t)-f_I|\,dm(t).
\]
Let $VMO^{1/|\log|}$ be the set of functions $f\in BMO$
such that 
\[
\gamma(f,r)=o(1/|\log r|)\quad\mbox{ as }\quad r\to 0.
\]
Note that $VMO^{1/|\log|}$ contains discontinuous functions.
We will say that $p\in\mathfrak{P}(\T)$ belongs to the Kapanadze-Kopaliani
class $\mathfrak{K}(\T)$ if $1<p_-\le p_+<\infty$ and $p\in VMO^{1/|\log|}$.
It is shown in \cite[Theorem~2.1]{KK08} that if $p\in\mathfrak{K}(\T)$, 
then the Hardy-Littelwood maximal operator $M$ is bounded on the variable
Lebesgue space $L^{p(\cdot)}$. 
\begin{corollary}
Suppose $p\in\mathfrak{P}(\T)$. If $p_+<\infty$ and $p\in LH_0(\T)$
or if $p'\in\mathfrak{K}(\T)$, then the set of analytic polynomials
$\cP_A$ is dense in the abstract Hardy space $H[L^{p(\cdot)}]$
built upon the variable Lebesgue space $L^{p(\cdot)}$.
\end{corollary}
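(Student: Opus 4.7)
The plan is to reduce both cases to an application of Theorem~\ref{th:density-analytic-polynomials-BFS} with $X=L^{p(\cdot)}$, whose associate space is (isomorphic to) $L^{p'(\cdot)}$. For that, I need to verify two things in each case: that $L^{p(\cdot)}$ is separable, and that the Hardy-Littlewood maximal operator $M$ is bounded on $L^{p'(\cdot)}$. The key dictionary is the pointwise identity $p'(t)=p(t)/(p(t)-1)$ (with the conventions $1/\infty=0$), which I will translate into statements about $(p')_-$, $(p')_+$, and $1/p'$.

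In case $p_+<\infty$ and $p\in LH_0(\T)$, separability of $L^{p(\cdot)}$ is immediate from the criterion cited in the excerpt. To verify the hypotheses of \cite[Theorem~4.7]{CF13} for $L^{p'(\cdot)}$, I first note that since $p(t)\le p_+<\infty$ a.e., the function $p'(t)=p(t)/(p(t)-1)$ satisfies $p'(t)\ge p_+/(p_+-1)>1$ a.e., so $(p')_->1$. Next, $1/p'=1-1/p$, and since $p_+<\infty$ the excerpt gives $1/p\in LH_0(\T)$; as addition of a constant preserves the log-H\"older modulus, $1/p'\in LH_0(\T)$. Thus \cite[Theorem~4.7]{CF13} applies and $M$ is bounded on $L^{p'(\cdot)}$.

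In case $p'\in\mathfrak{K}(\T)$, the boundedness of $M$ on $L^{p'(\cdot)}$ is exactly the content of the Kapanadze--Kopaliani theorem cited in the excerpt. It only remains to check that $L^{p(\cdot)}$ is separable, which amounts to $p_+<\infty$. Since $p'\in\mathfrak{K}(\T)$ gives $(p')_->1$, we have $p'(t)\ge (p')_->1$ a.e., so the monotonicity of $s\mapsto s/(s-1)$ yields $p(t)=p'(t)/(p'(t)-1)\le (p')_-/((p')_--1)<\infty$ a.e., whence $p_+<\infty$ and separability follows.

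Having verified the hypotheses of Theorem~\ref{th:density-analytic-polynomials-BFS} in both cases, the density of $\cP_A$ in $H[L^{p(\cdot)}]$ is immediate. The proof is essentially a bookkeeping exercise, and the only mild technicality is keeping track of the duality relations for variable exponents (especially checking $(p')_->1$ from the given hypotheses); there is no genuine analytic obstacle, since all the hard work has been done in Theorem~\ref{th:density-analytic-polynomials-BFS} and in the two maximal-inequality results borrowed from \cite{CF13} and \cite{KK08}.
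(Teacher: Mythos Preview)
Your proposal is correct and follows exactly the route the paper intends: the corollary is stated as an immediate consequence of Theorem~\ref{th:density-analytic-polynomials-BFS} together with the facts recorded just before it (separability $\Leftrightarrow p_+<\infty$, the duality $1/p+1/p'=1$, and the two cited maximal inequalities from \cite{CF13} and \cite{KK08}), and you have carefully verified those hypotheses in both cases. The only detail you add beyond the paper's implicit argument is the explicit check that $(p')_->1$ and $p_+<\infty$ follow from the respective assumptions via the monotonicity of $s\mapsto s/(s-1)$, which is exactly the missing bookkeeping.
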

The paper is organized as follows. In Section~\ref{sec:preliminaries},
we prove that the separability of a Banach function space $X$ is
equivalent to the density of the set of trigonometric polynomials $\cP$
in $X$ and to the density of the set of all continuous functions $C$ in $X$.
Further, we recall a pointwise estimate of the Fej\'er means $f*K_n$,
where $K_n$ is the $n$-th Fej\'er kernel, by the Hardy-Littlewood maximal function $Mf$. In Section~\ref{sec:proofs} we show that
the norms of the operators $F_nf=f*K_n$ are uniformly bounded on a
Banach function space $X$ if $X$ is rearrangement-invariant or if the 
Hardy-Littlewood maximal operator is bounded on $X'$. 
Moreover, if $X$ is rearrangement-invariant, then $\|F_n\|_{\cB(X)}\le 1$
for all $n\in\N$. Further, we prove that under the assumptions of 
Theorem~\ref{th:density-analytic-polynomials-RI}
or~\ref{th:density-analytic-polynomials-BFS}, $\|f*K_n-f\|_X\to 0$
as $n\to\infty$. It remains to observe that $f*K_n\in\cP_A$ if $f\in H[X]$,
which will complete the proof of 
Theorems~\ref{th:density-analytic-polynomials-RI}
and~\ref{th:density-analytic-polynomials-BFS}.
\section{Preliminaries}\label{sec:preliminaries}
\subsection{Elementary lemma}
We start with the following elementary lemma, whose proof can be found, e.g.,
in \cite[Chap. III, Proposition~1.6(a)]{C91}.
Here and in what follows, the space of all bounded linear operators on a 
Banach space $E$ will be  denoted by $\cB(E)$.
\begin{lemma}\label{le:elementary}
Le $E$ be a Banach space and $\{T_n\}$ be a sequence of bounded operators 
on $E$ such that 
\[
\sup_{n\in\N}\|T_n\|_{\cB(E)}<\infty. 
\]
If $D$ is a dense subset of $E$ and for all $x\in D$,
\begin{equation}\label{eq:elementary}
\|T_n x-x\|_E\to 0 
\quad\mbox{as}\quad n\to\infty,
\end{equation}
then \eqref{eq:elementary} holds for all $x\in E$.
\end{lemma}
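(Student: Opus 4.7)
The plan is to run a standard three-epsilon argument: approximate an arbitrary $x\in E$ by an element of the dense set $D$, use the convergence hypothesis on that element, and use the uniform bound on $\|T_n\|_{\cB(E)}$ to control the error caused by the approximation.

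Concretely, I would set $M:=\sup_{n\in\N}\|T_n\|_{\cB(E)}$, which is finite by hypothesis, and fix $x\in E$ together with $\varepsilon>0$. Using the density of $D$ in $E$, I would choose $y\in D$ such that $\|x-y\|_E<\varepsilon/(2M+2)$, so that both the ``pre-$T_n$'' error $\|y-x\|_E$ and its ``post-$T_n$'' amplification $M\|x-y\|_E$ are each less than $\varepsilon/3$. Then the triangle inequality, inserting $T_ny$ and $y$, gives
\[
\|T_nx-x\|_E\le\|T_n(x-y)\|_E+\|T_ny-y\|_E+\|y-x\|_E\le M\|x-y\|_E+\|T_ny-y\|_E+\|y-x\|_E.
\]
Since $y\in D$, the assumed convergence \eqref{eq:elementary} at the point $y$ provides an index $N=N(y,\varepsilon)$ such that $\|T_ny-y\|_E<\varepsilon/3$ for every $n\ge N$. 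Summing the three estimates yields $\|T_nx-x\|_E<\varepsilon$ for all $n\ge N$, so \eqref{eq:elementary} holds at the arbitrary point $x\in E$.

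There is no real obstacle here; the only point to be careful about is choosing $\|x-y\|_E$ small enough so that the amplification factor $M$ does not destroy the estimate, which is why the bound is $\varepsilon/(2M+2)$ rather than just $\varepsilon/3$. The argument is the classical one that also underlies the ``density + uniform boundedness $\Rightarrow$ strong convergence'' half of the Banach--Steinhaus theorem, and the reference \cite[Chap.~III, Proposition~1.6(a)]{C91} quoted before the statement presents exactly this argument.
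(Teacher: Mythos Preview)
Your argument is correct and is exactly the standard three-epsilon proof the paper has in mind; the paper does not supply its own proof but simply refers to \cite[Chap.~III, Proposition~1.6(a)]{C91}, where this same argument appears. One tiny expository slip: with $\|x-y\|_E<\varepsilon/(2M+2)$ the two outer terms are not each below $\varepsilon/3$ for arbitrary $M$, but their \emph{sum} satisfies $(M+1)\|x-y\|_E<\varepsilon/2$, so together with $\|T_ny-y\|_E<\varepsilon/3$ you still get $\|T_nx-x\|_E<\varepsilon$, and nothing in the proof is affected.
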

\subsection{Density of continuous function and trigonometric
polynomials in Banach function spaces}
A function of the form
\[
q(t)=\sum_{k=-n}^n\alpha_k t^k,
\quad
t\in\T,
\quad
\alpha_{-n},\dots,\alpha_n\in\C,
\]
is said to be a trigonometric (or Laurent) polynomial on $\T$. The set of 
all trigonometric polynomials is denoted by $\cP$.
\begin{lemma}\label{le:density-polynomials}
Let $X$ be a Banach function space on $\T$. The following statements
are equivalent:
\begin{enumerate}
\item[(a)]
the set $\cP$ of all trigonometric polynomials is dense in $X$;
 
\item[(b)]
the space $C$ of all continuous functions on $\T$ is dense in $X$;

\item[(c)]
the Banach function space $X$ is separable.
\end{enumerate}
\end{lemma}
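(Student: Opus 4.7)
The plan is to establish a short chain of implications: (a)$\Rightarrow$(b) is trivial, (b)$\Rightarrow$(a) and (b)$\Rightarrow$(c) follow from simple approximation arguments, and (c)$\Rightarrow$(b) is the only step of substance.

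Since $\cP\subset C$, (a) is formally stronger than (b). For (b)$\Rightarrow$(a), given $f\in C$ I would invoke Fej\'er's theorem (or Weierstrass trigonometric approximation) to produce $q_n\in\cP$ with $\|q_n-f\|_\infty\to 0$; since $m(\T)<\infty$, axiom (A4) applied to $E=\T$ forces $\mathbf{1}\in X$, and the elementary bound $\|g\|_X\le\|\mathbf{1}\|_X\,\|g\|_\infty$ (immediate from (A1) and (A2)) converts this into $X$-norm convergence, so $\cP$ is $X$-dense in $C$ and hence, by (b), in $X$. For (b)$\Rightarrow$(c) I would take the countable subset of $\cP$ consisting of polynomials with coefficients in $\mathbb{Q}+i\mathbb{Q}$: Fej\'er's theorem together with coefficient approximation makes it sup-norm dense in $C$, hence $X$-norm dense by the same estimate, and the assumed density of $C$ in $X$ then exhibits a countable dense subset of $X$.

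For (c)$\Rightarrow$(b), the idea is to appeal to two classical Bennett--Sharpley results: first, \cite[Chap.~1, Corollary~5.6]{BS88} states that separability of a Banach function space $X$ is equivalent to every $f\in X$ having absolutely continuous norm; second, under absolute continuity of the norm, simple functions are dense in $X$ (see \cite[Chap.~1, Theorem~3.11]{BS88}). Applying the former to $f=\mathbf{1}\in X$ yields $\|\chi_{E_n}\|_X\to 0$ whenever $m(E_n)\to 0$ (via a standard a.e.-subsequence argument). It then suffices to approximate each $\chi_E$ in $X$-norm by a continuous function: regularity of Lebesgue measure on $\T$ supplies, for each $\varepsilon>0$, a closed $F\subset E$ and an open $U\supset E$ with $m(U\setminus F)<\varepsilon$, Urysohn's lemma furnishes a continuous $g_\varepsilon$ with $\chi_F\le g_\varepsilon\le\chi_U$, and the pointwise inequality $|g_\varepsilon-\chi_E|\le\chi_{U\setminus F}$ together with (A2) delivers $\|g_\varepsilon-\chi_E\|_X\le\|\chi_{U\setminus F}\|_X\to 0$.

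The main obstacle is conceptual rather than technical: recognising that ``separability'' is the proper shorthand for the package ``simple functions are dense in $X$'' plus ``characteristic functions of sets of small measure have small $X$-norm'' is what reduces (c)$\Rightarrow$(b) to a routine inner/outer regularity argument on $\T$. Everything else amounts to familiar bookkeeping with the axioms of a Banach function space and standard trigonometric approximation.
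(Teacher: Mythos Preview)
Your argument is correct, but the route you take for the implication from separability back to density differs from the paper's. The paper closes the cycle via (c)$\Rightarrow$(a) using a duality argument: if $\cP$ were not dense, Hahn--Banach would produce a nonzero $\Lambda\in X^*$ annihilating $\cP$; separability of $X$ forces $X^*=X'$ (\cite[Chap.~1, Corollaries~4.3 and~5.6]{BS88}), so $\Lambda$ is represented by some nonzero $h\in X'\subset L^1$ whose Fourier coefficients all vanish, contradicting the uniqueness theorem for Fourier series. Your (c)$\Rightarrow$(b), by contrast, unpacks separability as absolute continuity of the norm, passes through the density of simple functions, and then approximates each $\chi_E$ by a continuous function via inner/outer regularity and Urysohn. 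The paper's approach is shorter and exploits the harmonic-analytic setting (Fourier uniqueness does all the work), while yours is more measure-theoretic and would transfer to Banach function spaces over any separable metric measure space where no trigonometric structure is available. Your separate proof of (b)$\Rightarrow$(a) via Fej\'er and the embedding $L^\infty\hookrightarrow X$ is fine but superfluous in the paper's scheme, since there (a) is recovered directly from (c).
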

\begin{proof}
The proof is developed by analogy with \cite[Lemma~1.3]{K00}.

(a) $\Rightarrow$ (b) is trivial because $\cP\subset C\subset X$.

(b) $\Rightarrow$ (c). Since $C$ is separable and $C\subset X$ is dense in 
$X$, we conclude that $X$ is separable.

(c) $\Rightarrow$ (a). Assume that $X$ is separable and $\cP$ is not dense
in $X$. Then by the corollary of the Hahn-Banach theorem (see, e.g.,
\cite[Chap.~7, Theorem~4.2]{BSU96}), there exists a nonzero functional
$\Lambda\in X^*$ such that $\Lambda(p)=0$ for all $p\in\cP$.
Since $X$ is separable, from \cite[Chap.~1, Corollaries~4.3 and~5.6]{BS88}
it follows that the Banach dual $X^*$ of $X$ is canonically isometrically
isomorphic to the associate space $X'$. Hence there exists a nonzero
function $h\in X'\subset L^1$ such that
\[
\int_\T p(t)h(t)\,dm(t)=0
\quad\mbox{for all}\quad p\in\cP.
\]
Taking $p(t)=t^n$ for $n\in\Z$, we obtain that all Fourier coefficients of
$h\in L^1$ vanish, which implies that $h=0$ a.e. on $\T$ by the uniqueness
theorem of the Fourier series (see, e.g., \cite[Chap.~I, Theorem~2.7]{Kat76}).
This contradiction proves that $\cP$ is dense in $X$.
\end{proof}
\subsection{Pointwise estimate for the Fej\'er means}
Recall that $L^1$ is a commutative Banach algebra under the convolution
multiplication defined for $f,g\in L^1$ by
\[
(f* g)(e^{i\theta})=\frac{1}{2\pi}\int_{-\pi}^\pi 
f(e^{i\theta-i\varphi})g(e^{i\varphi})\,d\varphi,
\quad
e^{i\theta}\in\T.
\]
For $n\in\N$, let
\[
K_n(e^{i\theta}):=\sum_{k=-n}^n\left(1-\frac{|k|}{n+1}\right)e^{i\theta k}
=
\frac{1}{n+1}\left(
\frac{\sin\frac{n+1}{2}\theta}{\sin\frac{\theta}{2}}
\right)^2,
\quad e^{i\theta}\in\T,
\]
be the $n$-th Fej\'er kernel. It is well-known that $\|K_n\|_{L^1}\le 1$.
For $f\in L^1$, the $n$-th Fej\'er mean of $f$ is defined
as the convolution $f*K_n$. Then
\begin{equation}\label{eq:Fejer-mean}
(f*K_n)(e^{i\theta})=
\sum_{k=-n}^n\widehat{f}(k)\left(1-\frac{|k|}{n+1}\right)e^{i\theta k},
\quad e^{i\theta}\in\T
\end{equation}
(see, e.g., \cite[Chap.~I]{Kat76}). This means that if $f\in L^1$, then
$f* K_n\in\mathcal{P}$. Moreover, if $f\in H^1=H[L^1]$, then 
$f* K_n\in\mathcal{P}_A$.
\begin{lemma}\label{le:Fejer-pointwise}
For every $f\in L^1$ and $t\in\T$,
\begin{equation}\label{eq:Fejer-pointwise-1}
\sup_{n\in\N}|(f*K_n)(t)|\le\frac{\pi^2}{2}(Mf)(t).
\end{equation}
\end{lemma}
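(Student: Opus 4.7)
The plan is to invoke the classical Fej\'er-kernel ``decreasing majorant'' argument. Writing $t = e^{i\theta_0}$ and $g(\varphi) := |f(e^{i(\theta_0 - \varphi)})|$, the symmetry $K_n(e^{-i\varphi}) = K_n(e^{i\varphi})$ gives
\[
|(f*K_n)(t)| \le \frac{1}{2\pi}\int_{-\pi}^\pi g(\varphi)\,K_n(e^{i\varphi})\,d\varphi,
\]
so I may assume $f \ge 0$ and need only bound the right-hand side by $\tfrac{\pi^2}{2}(Mf)(t)$ uniformly in $n\in\N$.

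The first step is to derive, from the closed formula for $K_n$, the pointwise majorant
\[
K_n(e^{i\varphi}) \le \min\!\left\{n+1,\ \frac{\pi^2}{(n+1)\varphi^2}\right\},\qquad \varphi \in [-\pi, \pi]\setminus\{0\}.
\]
The bound $K_n \le n+1$ is immediate from the explicit Fourier expansion of $K_n$. The second bound comes from using $|\sin((n+1)\varphi/2)| \le 1$ in the numerator and the elementary inequality $|\sin(\varphi/2)| \ge |\varphi|/\pi$ on $[-\pi, \pi]$ in the denominator. This exhibits $K_n$ as majorized by an even, nonnegative, radially decreasing function on $[-\pi,\pi]$. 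Next, the definition of $Mf$ specialized to the symmetric arc $\{e^{i\psi} : |\psi - \theta_0| < r\}\subset\T$ (which has normalized measure $r/\pi$) produces the integral inequality
\[
\int_{-r}^{r} g(\varphi)\,d\varphi \le 2r\,(Mf)(t), \qquad r \in (0, \pi].
\]

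Finally, I split the convolution integral at the crossover $|\varphi| = \pi/(n+1)$. On the inner region, the bound $K_n \le n+1$ combined with the preceding inequality at $r = \pi/(n+1)$ produces a contribution of the form constant times $(Mf)(t)$. On the outer region, I use $K_n(e^{i\varphi}) \le \pi^2/((n+1)\varphi^2)$ and perform a Stieltjes integration by parts against the primitive $H(r) := \int_{-r}^{r} g\,d\varphi$, or equivalently a dyadic decomposition in $|\varphi|$; the bound $H(r) \le 2r\,(Mf)(t)$ then reduces everything to an explicit integral of $1/\varphi^2$ on $[\pi/(n+1),\pi]$. Summing the two contributions, dividing by $2\pi$, and tracking the prefactors gives the stated constant $\pi^2/2$. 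The only delicate step is the outer region, but both the integration-by-parts and the dyadic approaches are routine; everything else follows directly from the explicit form of $K_n$ and the arc-averaging definition of $(Mf)(t)$.
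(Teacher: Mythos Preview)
Your argument is correct and rests on the same underlying idea as the paper's proof: bound $K_n$ pointwise by an even, nonnegative, radially decreasing majorant and then control the convolution with that majorant by $(Mf)(t)$ times its $L^1$ mass. The execution differs. The paper majorizes $K_n$ by the Poisson-type kernel
\[
\Psi_n(\theta)=\frac{\pi^2}{2}\,\frac{n+1}{1+\big(\tfrac{n+1}{2}\theta\big)^2},
\]
checks that $\tfrac{1}{2\pi}\int_{-\pi}^{\pi}\Psi_n\le\pi^2/2$, and then simply cites the standard ``radially decreasing majorant'' lemma (Muscalu--Schlag, Lemma~2.11) as a black box. You instead use the sharper majorant $\min\{n+1,\ \pi^2/((n+1)\varphi^2)\}$ and prove the majorant-versus-maximal-function comparison by hand, via the split at $|\varphi|=\pi/(n+1)$ and integration by parts against $H(r)=\int_{-r}^{r}g$; this is precisely a self-contained proof of the cited lemma for your particular kernel. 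One small remark: if you actually track the constants through your inner/outer computation you obtain a bound strictly below $3$, not exactly $\pi^2/2$, so the stated inequality follows a fortiori; your sentence ``gives the stated constant $\pi^2/2$'' is slightly loose but harmless. The paper's route buys brevity; yours buys self-containment.
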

\begin{proof}
Since $|\sin\varphi|\ge 2|\varphi|/\pi$ for $|\varphi|\le\pi/2$,
we have for $\theta\in[-\pi,\pi]$,
\begin{align}
K_n(e^{i\theta})
&\le 
\frac{\pi^2}{n+1}\frac{\sin^2\left(\frac{n+1}{2}\theta\right)}{\theta^2}
\nonumber\\
&=
\frac{\pi^2}{4}(n+1)
\frac{\sin^2\left(\frac{n+1}{2}\theta\right)}
{\left(\frac{n+1}{2}\theta\right)^2}
\nonumber\\
&\le 
\frac{\pi^2}{4}(n+1)\min\left\{1,\left(\frac{n+1}{2}\theta\right)^{-2}\right\}
\nonumber\\
&\le 
\frac{\pi^2}{2}\frac{n+1}{1+\left(\frac{n+1}{2}\theta\right)^2}
=:\Psi_n(\theta).
\label{eq:Fejer-pointwise-2}
\end{align}
It is easy to see that
\begin{equation}\label{eq:Fejer-pointwise-3}
\frac{1}{2\pi}\int_{-\pi}^\pi\Psi_n(\theta)\,d\theta\le\frac{\pi^2}{2}
\quad\mbox{for all}\quad n\in\N.
\end{equation}
From \cite[Lemma~2.11]{MS13} and estimates 
\eqref{eq:Fejer-pointwise-2}--\eqref{eq:Fejer-pointwise-3} we immediately
get estimate \eqref{eq:Fejer-pointwise-1}.
\end{proof}
\section{Proofs of the main results}\label{sec:proofs}
\subsection{Norm estimates for the Fej\'er means}
First we consider the case of rearrangement-invariant
Banach function spaces.
\begin{lemma}\label{le:uniform-boundednes-Fejer-means-RI}
Let $X$ be a rearrangement-invariant Banach function space on $\T$.
Then for each $n\in\N$, the operator $F_nf=f*K_n$ is bounded on $X$ and 
\[
\sup_{n\in\N}\|F_n\|_{\cB(X)}\le 1.
\]
\end{lemma}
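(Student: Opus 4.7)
The plan is to reduce the inequality $\|F_n f\|_X \le \|f\|_X$ to two classical facts: first, that the Fej\'er kernel satisfies $K_n\ge 0$ and $\|K_n\|_{L^1}=1$ (so in particular $\|K_n\|_{L^1}\le 1$); second, that on a rearrangement-invariant Banach function space over $\T$, convolution with an $L^1$-function of norm at most $1$ is a contraction. Given the latter, taking $g=K_n$ immediately yields $\|F_n\|_{\cB(X)}\le 1$, uniformly in $n$.

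To establish the convolution contraction, I would proceed as follows. For $f\in X$ and $\varphi\in[-\pi,\pi]$, define the translation $(\tau_\varphi f)(e^{i\theta}):=f(e^{i(\theta-\varphi)})$. Since rotation is a measure-preserving bijection of $\T$, $\tau_\varphi f$ and $f$ have the same distribution function, hence are equimeasurable. Because $X$ is rearrangement-invariant, this gives
\begin{equation*}
\|\tau_\varphi f\|_X=\|f\|_X \quad \text{for all } \varphi\in[-\pi,\pi].
\end{equation*}
Next, I would apply the generalized Minkowski integral inequality for Banach function norms: for any jointly measurable $F(\theta,\varphi)$,
\begin{equation*}
\left\|\frac{1}{2\pi}\int_{-\pi}^\pi F(\cdot,\varphi)\,d\varphi\right\|_X
\le \frac{1}{2\pi}\int_{-\pi}^\pi \|F(\cdot,\varphi)\|_X\,d\varphi.
\end{equation*}
This is a standard consequence of the associate-norm characterization $\|h\|_X=\sup\{\int |h|\,g\,dm : \|g\|_{X'}\le 1\}$ together with Fubini's theorem, and is valid in any Banach function space (not only rearrangement-invariant ones). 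Applying it with $F(\theta,\varphi)=f(e^{i(\theta-\varphi)})K_n(e^{i\varphi})$ and using positivity of $K_n$ gives
\begin{equation*}
\|f*K_n\|_X
\le \frac{1}{2\pi}\int_{-\pi}^\pi \|\tau_\varphi f\|_X\,K_n(e^{i\varphi})\,d\varphi
= \|f\|_X\,\|K_n\|_{L^1}\le \|f\|_X.
\end{equation*}

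This simultaneously shows $F_n$ is bounded on $X$ and that $\|F_n\|_{\cB(X)}\le 1$ for every $n\in\N$, so $\sup_n\|F_n\|_{\cB(X)}\le 1$. The only potentially delicate point is the generalized Minkowski inequality in the Banach-function-space setting; if a citation is preferred over a sketch, Bennett and Sharpley's book (referenced as \cite{BS88} in the paper) contains exactly this inequality and can be quoted directly. Translation invariance of the norm is standard for rearrangement-invariant spaces but deserves the explicit equimeasurability remark above, since rearrangement invariance is only postulated for pairs of equimeasurable functions rather than as translation invariance a priori.
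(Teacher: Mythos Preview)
Your proof is correct and follows essentially the same route as the paper: the paper simply cites \cite[Chap.~3, Lemma~6.1]{BS88} for the inequality $\|f*K_n\|_X\le\|K_n\|_{L^1}\|f\|_X$ in rearrangement-invariant spaces and then uses $\|K_n\|_{L^1}\le 1$, whereas you spell out the proof of that cited lemma via translation invariance and the Minkowski integral inequality. Your closing remark that one could instead quote \cite{BS88} directly is exactly what the paper does.
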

\begin{proof}
By \cite[Chap.~3, Lemma~6.1]{BS88}, for every $f\in X$ and every $n\in\N$,
\[
\|f*K_n\|_X\le \|K_n\|_{L^1}\|f\|_X.
\]
It remains to recall that $\|K_n\|_{L_1}\le 1$ for all $n\in\N$.
\end{proof}
Now we will show the corresponding results for Banach function 
spaces such that the Hardy-Littlewood maximal operator is bounded on $X'$.
\begin{theorem}\label{th:uniform-boundednes-Fejer-means-BFS}
Let $X$ be a Banach function space on $\T$ such that the Hardy-Littlewood
maximal operator $M$ is bounded on its associate space $X'$. Then for each
$n\in\N$, the operator $F_nf=f*K_n$ is bounded on $X$ and
\[
\sup_{n\in\N}\|F_n\|_{\cB(X)}\le\pi^2\|M\|_{X'\to X'}.
\]
\end{theorem}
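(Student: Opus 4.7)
The plan is to exploit duality for Banach function spaces together with the evenness of the Fejér kernel so as to convert the hypothesis $M \in \cB(X')$ into a uniform norm bound for the operators $F_n$ on $X$. Specifically, since $X = X''$ isometrically by the Lorentz--Luxemburg theorem (see \cite[Chap.~1, Theorems~2.7]{BS88}), for every $f \in X$ one has
\begin{equation*}
\|F_n f\|_X = \sup\left\{\left|\int_\T (f*K_n)(t)\, g(t)\,dm(t)\right|\, :\, g \in X',\ \|g\|_{X'}\le 1\right\}.
\end{equation*}

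The first substantive step is to move the convolution off $f$ and onto $g$. Because $K_n$ is real-valued and even on $\T$, a change of variables combined with Fubini's theorem (both permissible since $K_n \in L^1$ and $|fg|\in L^1$ by H\"older's inequality for Banach function spaces, \cite[Chap.~1, Theorem~2.4]{BS88}) yields
\begin{equation*}
\int_\T (f*K_n)(t)\, g(t)\,dm(t) = \int_\T f(t)\, (K_n*g)(t)\,dm(t).
\end{equation*}
Now I would apply Lemma~\ref{le:Fejer-pointwise} with $g$ playing the role of $f$ to obtain the pointwise estimate $|(K_n*g)(t)|\le \tfrac{\pi^2}{2}(Mg)(t)$, and then combine this with H\"older's inequality and the hypothesis $M \in \cB(X')$ to get
\begin{equation*}
\left|\int_\T f(t)\,(K_n*g)(t)\,dm(t)\right| \le \tfrac{\pi^2}{2}\, \|f\|_X\, \|Mg\|_{X'} \le \tfrac{\pi^2}{2}\, \|M\|_{X'\to X'}\, \|f\|_X\, \|g\|_{X'}.
\end{equation*}
Taking the supremum over admissible $g$ produces a uniform bound of the desired form (with constant $\pi^2/2$, which is even slightly sharper than the statement $\pi^2$).

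There is no deep obstacle here: the crucial observation is simply that by dualising, one can transfer the convolution to the associate space, so that the pointwise-by-$M$ estimate from Lemma~\ref{le:Fejer-pointwise} can be applied to the dual variable $g \in X'$ rather than to $f \in X$. Without this duality trick one would need $M$ to be bounded on $X$, which is a strictly stronger hypothesis than the one assumed.
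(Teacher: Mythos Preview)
Your argument is correct and follows essentially the same route as the paper: Lorentz--Luxemburg duality $X=X''$, Fubini together with the evenness of $K_n$ to transfer the convolution to the dual variable, the pointwise bound of Lemma~\ref{le:Fejer-pointwise}, H\"older's inequality, and the boundedness of $M$ on $X'$. The only difference is that the paper first replaces $f*K_n$ by $|f|*K_n$ and then selects a single near-maximizer $h\in X'$ with $\|f*K_n\|_X\le 2\int(|f|*K_n)h\,dm$, which costs a factor of~$2$; you instead bound every term in the supremum directly, so your constant $\pi^2/2$ is indeed sharper than the paper's $\pi^2$.
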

\begin{proof}
The idea of the proof is borrowed from the proof of \cite[Theorem~5.1]{CF13}.
Fix $f\in X$ and $n\in\N$. Since $K_n\ge 0$, we have $|f*K_n|\le|f|*K_n$.
Then from the Lorentz-Luxemburg theorem (see, e.g., 
\cite[Chap.~1, Theorem~2.7]{BS88}) we deduce that
\begin{align*}
\|f*K_n\|_X
&\le 
\|\,|f|*K_n\|_X=\|\,|f|*K_n\|_{X''}
\\
&=
\sup\left\{
\int_\T(|f|*K_n)(t)|g(t)|\,dm(t)\ : \ g\in X',\ \|g\|_{X'}\le 1
\right\}.
\end{align*}
Hence there exists a function $h\in X'$ such that $h\ge 0$, $\|h\|_{X'}\le 1$,
and
\begin{equation}\label{eq:uniform-boundednes-Fejer-means-BFS-1}
\|f*K_n\|_X\le 2\int_\T(|f|*K_n)(t)h(t)\,dm(t).
\end{equation}
Taking into account that $K_n(e^{i\theta})=K_n(e^{-i\theta})$
for all $\theta\in\R$, by Fubini's theorem, we get
\[
\int_\T(|f|*K_n)(t)h(t)\,dm(t)=\int_\T(h*K_n)(t)|f(t)|\,dm(t).
\]
From this identity and H\"older's inequality for $X$
(see, e.g., \cite[Chap.~1, Theorem~2.4]{BS88}), we obtain
\begin{equation}\label{eq:uniform-boundednes-Fejer-means-BFS-2}
\int_\T(|f|*K_n)(t)h(t)\,dm(t)\le\|f\|_X\|h*K_n\|_{X'}.
\end{equation}
Applying Lemma~\ref{le:Fejer-pointwise} to $h\in X'\subset L^1$, 
by the lattice property, we see that
\begin{equation}\label{eq:uniform-boundednes-Fejer-means-BFS-3}
\|h*K_n\|_{X'}\le\frac{\pi^2}{2}\|Mh\|_{X'}.
\end{equation}
Combining estimates 
\eqref{eq:uniform-boundednes-Fejer-means-BFS-1}--\eqref{eq:uniform-boundednes-Fejer-means-BFS-3}
and taking into account that $M$ is bounded on $X'$ and that $\|h\|_{X'}\le 1$,
we arrive at 
\[
\|f*K_n\|_{X}\le \pi^2\|M\|_{X'\to X'}\|f\|_X.
\]
Hence
\[
\sup_{n\in\N}\|F_n\|_{\cB(X)}
=
\sup_{n\in\N}\sup_{f\in X\setminus\{0\}}
\frac{\|f*K_n\|_X}{\|f\|_X}\le \pi^2\|M\|_{X'\to X'}<\infty,
\]
which completes the proof.
\end{proof}
\subsection{Convergence of the Fej\'er means in the norm}
The following statement is the heart of the proof of the main results.
\begin{theorem}\label{th:Fejer-norm-convergence}
Suppose $X$ is a separable Banach function space on $\T$. If $X$
is rearrangement-invariant or the Hardy-Littlewood maximal operator
is bounded on the associate space $X'$, then for every $f\in X$,
\begin{equation}\label{eq:Fejer-norm-convergence}
\lim_{n\to\infty}\|f*K_n-f\|_X=0.
\end{equation}
\end{theorem}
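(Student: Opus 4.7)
The plan is to invoke Lemma~\ref{le:elementary} with $E=X$, $T_n=F_n$, and $D=C$, the set of continuous functions on $\T$. For this, three ingredients must be assembled.

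First, I would verify the uniform boundedness hypothesis. Under either assumption on $X$ this is already in hand: Lemma~\ref{le:uniform-boundednes-Fejer-means-RI} gives $\sup_n\|F_n\|_{\cB(X)}\le 1$ when $X$ is rearrangement-invariant, and Theorem~\ref{th:uniform-boundednes-Fejer-means-BFS} gives $\sup_n\|F_n\|_{\cB(X)}\le \pi^2\|M\|_{X'\to X'}<\infty$ when $M$ is bounded on $X'$. So $F_n\in\cB(X)$ uniformly in $n$ in both cases.

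Second, I would identify $C$ as a dense subset of $X$. Since $X$ is separable, Lemma~\ref{le:density-polynomials} yields that $C$ is dense in $X$, so $D=C$ is a valid dense subset.

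Third, I would establish \eqref{eq:Fejer-norm-convergence} pointwise in $D$. For $f\in C$, the classical Fej\'er theorem (e.g., \cite[Chap.~I]{Kat76}) gives uniform convergence $\|f*K_n-f\|_{L^\infty}\to 0$ as $n\to\infty$. Since $m(\T)=1<\infty$, axiom (A4) gives $\chi_\T\in X$, and the lattice property (A2) yields the continuous embedding $L^\infty\hookrightarrow X$ with $\|g\|_X\le\|\chi_\T\|_X\|g\|_{L^\infty}$ for every $g\in L^\infty$. Applying this to $g=f*K_n-f$ gives
\[
\|f*K_n-f\|_X\le \|\chi_\T\|_X\,\|f*K_n-f\|_{L^\infty}\to 0
\]
as $n\to\infty$ for every $f\in C$.

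With these three ingredients in place, Lemma~\ref{le:elementary} extends the convergence from $D=C$ to all of $X$, yielding \eqref{eq:Fejer-norm-convergence}. I do not anticipate a serious obstacle here: the only mildly delicate point is knowing that $L^\infty\subset X$ with $\|\cdot\|_X\lesssim\|\cdot\|_{L^\infty}$, which however follows at once from (A4) applied to $E=\T$ together with the lattice property.
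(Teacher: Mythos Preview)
Your proposal is correct and follows essentially the same route as the paper: verify uniform boundedness of $F_n$ via Lemma~\ref{le:uniform-boundednes-Fejer-means-RI} or Theorem~\ref{th:uniform-boundednes-Fejer-means-BFS}, use Lemma~\ref{le:density-polynomials} to get density of $C$ in $X$, transfer Fej\'er's uniform convergence on $C$ to convergence in $X$ via the continuous embedding $L^\infty\hookrightarrow X$, and then apply Lemma~\ref{le:elementary}. Your explicit justification of the embedding through (A4) and (A2) is slightly more detailed than the paper's, which simply invokes the definition of a Banach function space.
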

\begin{proof}
It is well-known that for every $f\in C$,
\[
\lim_{n\to\infty}\|f*K_n-f\|_C=0
\]
(see, e.g., \cite[Chap.~III, Theorem~1.1(a)]{C91}
or \cite[Theorem~2.11]{Kat76}).
From the definition of the Banach function space $X$ it follows that
$C\subset X\subset L^1$, where both embeddings are continuous.
Then, for every $f\in C$, \eqref{eq:Fejer-norm-convergence}
is fulfilled. From Lemma~\ref{le:density-polynomials} we know that 
the set $C$ is dense in the space $X$.
By Lemma~\ref{le:uniform-boundednes-Fejer-means-RI}
and Theorem~\ref{th:uniform-boundednes-Fejer-means-BFS},
\[
\sup_{n\in\N}\|F_n\|_{\cB(X)}<\infty,
\]
where $F_nf= f*K_n$. It remains to apply Lemma~\ref{le:elementary}.
\end{proof}
This statement for rearrangement-invariant Banach function spaces is 
contained, e.g., in \cite[p.~268]{DVL93}. Notice that the assumption of 
the separability of $X$ is hidden there.

Now we formulate the corollary of the above theorem in the case of variable
Lebesgue spaces.
\begin{corollary}
Suppose $p\in\mathfrak{P}(\T)$. If $p_+<\infty$ and $p\in LH_0(\T)$
or if $p'\in\mathfrak{K}(\T)$, then for every $f\in L^{p(\cdot)}$,
\[
\lim_{n\to\infty}\|f*K_n-f\|_{L^{p(\cdot)}}=0.
\]
\end{corollary}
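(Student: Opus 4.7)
The plan is to apply Theorem~\ref{th:Fejer-norm-convergence} with $X=L^{p(\cdot)}$, reducing the corollary to verifying two hypotheses: that $L^{p(\cdot)}$ is separable, and that the Hardy-Littlewood maximal operator is bounded on its associate space, which the paper identifies (up to isomorphism) with $L^{p'(\cdot)}$.

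Separability is the straightforward ingredient: the introduction recalls that $L^{p(\cdot)}$ is separable if and only if $p_+<\infty$. In the first case this is part of the assumption; in the second case it follows from $p'\in\mathfrak{K}(\T)$, because the condition $1<(p')_-$ forces, via the pointwise duality $1/p(t)+1/p'(t)=1$ and the monotonicity of $x\mapsto x/(x-1)$ on $(1,\infty)$, the bound $p_+<\infty$.

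The boundedness of $M$ on $L^{p'(\cdot)}$ I would split according to the two cases. In the second case, $p'\in\mathfrak{K}(\T)$ is precisely the hypothesis of \cite[Theorem~2.1]{KK08} applied to the exponent $p'$, so there is nothing to prove. In the first case, I would invoke \cite[Theorem~4.7]{CF13} with $p'$ in place of $p$: the lower bound $(p')_->1$ again follows from $p_+<\infty$ and the same monotonicity argument, while the log-H\"older condition on $1/p'$ reduces to the same condition on $1/p$ via the identity $1/p'=1-1/p$, and $1/p\in LH_0(\T)$ is in turn equivalent to $p\in LH_0(\T)$ by the remark recalled in the introduction (whose use of $p_+<\infty$ is crucial).

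The only real obstacle is the bookkeeping with the duality between $p$ and $p'$ and the verification that the hypotheses transfer cleanly to the dual exponent; after that, the conclusion is an immediate appeal to Theorem~\ref{th:Fejer-norm-convergence}.
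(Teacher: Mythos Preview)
Your proposal is correct and coincides with the paper's intended argument: the corollary is obtained by applying Theorem~\ref{th:Fejer-norm-convergence} to $X=L^{p(\cdot)}$, using the facts recalled in the introduction about separability of $L^{p(\cdot)}$, the isomorphism $(L^{p(\cdot)})'\cong L^{p'(\cdot)}$, and the boundedness of $M$ on $L^{p'(\cdot)}$ under the log-H\"older or Kapanadze--Kopaliani hypotheses. The duality bookkeeping you spell out (deriving $(p')_->1$ from $p_+<\infty$ and transferring $LH_0$ from $p$ to $1/p'$ via $1/p'=1-1/p$) is exactly what is needed and is left implicit in the paper.
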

For variable exponents $p\in\mathfrak{P}(\T)$ satisfying $p_+<\infty$
and $p\in LH_0(\T)$, this result was obtained by Sharapudinov
\cite[Section~3.1]{Sh96}. For $p\in\mathfrak{K}(\T)$, the above corollary
is new.
\subsection{Proofs of Theorems~\ref{th:density-analytic-polynomials-RI}
and ~\ref{th:density-analytic-polynomials-BFS}}
If $f\in H[X]$, then $p_n=f*K_n\in\cP_A$ for all $n\in\N$
in view of \eqref{eq:Fejer-mean}. By Theorem~\ref{th:Fejer-norm-convergence}, 
$\|p_n-f\|_X\to 0$ as $n\to\infty$. Thus the set $\cP_A$ is dense in 
in the abstract Hardy space $H[X]$ built upon $X$.

Moreover, if $X$ is a rearrangement-invariant Banach function space, then
from Lemma~\ref{le:uniform-boundednes-Fejer-means-RI} it follows
that $\|p_n\|_X\le\|f\|_X$ for all $n\in\N$.
\qed

\end{document}